\documentclass[12pt]{article}
\usepackage{todonotes}
\usepackage{amsmath}                            
\usepackage{amssymb}                            
\usepackage{amsthm}                             
\usepackage[pdfborder={0 0 0}]{hyperref}        
\usepackage{algorithm}
\usepackage{hyperref}
\usepackage[utf8]{inputenc}
\usepackage{enumitem}
\usepackage{centernot}
\usepackage{placeins}
\usepackage{amsthm}
\usepackage{amsmath}
\usepackage{mathrsfs}
\usepackage{amssymb}
\usepackage{scalefnt}
\usepackage{amsfonts}
\usepackage{xcolor}
\usepackage{color}
\usepackage{xspace}
\usepackage{microtype}
\usepackage[mathscr]{eucal}
\usepackage{float}
\usepackage{amsthm}

\usepackage{centernot}
\usepackage{tikz}
\usepackage{placeins}
\usepackage{amsthm}
\usepackage{amsmath}
\usepackage{amssymb}
\usepackage{scalefnt}
\usepackage{amsfonts}
\usepackage{color}
\usepackage{xspace}
\usepackage{microtype}
\usepackage[mathscr]{eucal}

\DeclareMathOperator{\Seq}{\subseteq}
\DeclareMathOperator{\card}{card}

\DeclareMathOperator{\cone}{\textbf{cone}}
\DeclareMathOperator{\cor}{\textbf{cor}}
\DeclareMathOperator{\rev}{\textbf{rev}}
\DeclareMathOperator{\qface}{\textbf{qface}}
\DeclareMathOperator{\succc}{\textbf{succ}}
\DeclareMathOperator{\head}{\textbf{head}}
\DeclareMathOperator{\tail}{\textbf{tail}}
\DeclareMathOperator{\entry}{\textbf{entry}}
\DeclareMathOperator{\lefttt}{\textbf{left}}
\DeclareMathOperator{\xcor}{\textbf{xcor}}
\DeclareMathOperator{\ycor}{\textbf{ycor}}
\DeclareMathOperator{\rep}{\textbf{reposition}}
\newtheorem{theorem}{Theorem}
\newtheorem{lemma} [theorem] {Lemma}

\theoremstyle{theorem}

\theoremstyle{definition}
\newtheorem{definition}{Definition}
\usepackage{algorithm}
\usepackage{algpseudocode}
\def\BState{\State\hskip-\ALG@thistlm}
\makeatother
\newcounter{claimCount}
\setcounter{claimCount}{0}

\theoremstyle{remark}

\begin{document}
\title{Traversal with Enumeration of Geometric Graphs in Bounded Space}  	

\author{
Sahand Khakabimamaghani\\
\texttt{sahandk@sfu.ca}
\and
Masood Masjoody\\
\texttt{mmasjood@sfu.ca}
\and
Ladislav Stacho\\
\texttt{lstacho@sfu.ca}
}  	

\maketitle

\begin{abstract}
In this paper, we provide an algorithm for traversing geometric graphs which visits all vertices, and reports every vertex and edge exactly once. To achieve this, we combine a given geometric graph $G$ with the integer lattice, seen as a graph, in such a way that the resulting hypothetical graph can be traversed using the algorithm in \cite{Chavez}. To overcome the problem with hypothetical vertices and edges, we develop an algorithm for visiting any $k$th neighborhood of a vertex in a graph straight-line drawn in the plane using $O(\log k)$ memory. The memory needed to complete the traversal of a geometric graph then turns out to depend on the maximum ratio of the graph distance and Euclidean distance for pairs of distinct vertices of $G$ at Euclidean distance greater than one and less than $2\sqrt{2}$.
\end{abstract} 	

\section{Introduction}\label{sec:intro}
The problem of \emph{graph traversal} is one of the fundamental combinatorial problems. Given as an input an undirected graph $G$ and a vertex $v$, the graph traversal problem is to start on $v$ and continue visiting vertices of $G$ by a sequence of moves along edges of $G$ in such a way that at some point every vertex of $G$ reachable by a path from $v$  is visited at least once. The moves are guided by an edge labelling; every vertex $v$ has the edges incident to it labelled with integers $1$ to $d(v)$ so that labels are a permutation. So a traversal algorithm at each vertex will choose a label of an edge and then moves along the edge with that label to the neighbor of the vertex. A problem related to graph traversal is $s,t$\emph{-connectivity} where we are given two vertices $s$ and $t$ of $G$, and need to decide whether or not there is a path from $s$ to $t$ in $G$. It is obvious that a solution to the graph traversal problem also gives a solution to the $s,t$-connectivity problem. Many basic graph algorithms involve making traversal or determining connectivity.  	

The time complexity of both problems is well understood and it is linear in the number of edges of the graph. This can be achieved by classical breadth-first search or depth-first search algorithms. Note that these algorithms can solve the two problems also in their directed version, i.e., when the graph is a directed graph. The space required to run these algorithms is linear as well, and understanding the space complexity of these problems was a major problem for several decades.  	

It was long known \cite{aleliunas79} that a random walk (a walk that starts at a vertex and chooses subsequent vertices uniformly at random from current available neighbors) will visit all the vertices reachable from the original vertex in polynomial number of steps. The major problem was to derandomize this simple algorithm without substantially increasing the space.  	

The first deterministic super-polynomial time algorithm for the traversal problem is the algorithm by Savitch \cite{savitch70} that needs $O(\log^2 n)$ space.  The first improvement on this classical result was done by Nisan et al. \cite{nisan92} by showing that the traversal can be performed in $O(\log^{3/2}n)$ space (the time is still super-polynomial). Building on this work as well as on related works \cite{nisan92a,saks99}, Armoni et al. \cite{armoni00} improved the space complexity to $O(\log^{4/3}n)$. Note that the most space-efficient polynomial time algorithm for the traversal problem was Nisan's \cite{nisan92b} which required $O(\log^2 n)$ space. A big improvement was achieved by Trifonov \cite{trifonov05} whose polynomial algorithm required $O(\log n\log\log n)$ space only. Note at this point that $\Omega(\log n)$ space is needed for any algorithm to solve the traversal problem. In 2005, Reingold in his seminal work \cite{reingold05}  solved the long standing problem by presenting an $O(\log n)$ space deterministic polynomial algorithm that solves the traversal problem on any $n$-vertex graph.  	

The traversal problem asks to visit every vertex at least once, and in fact in all the algorithms mentioned above a vertex may be visited many times.  In many applications one needs to have a list of vertices, edges, or some other combinatorial objects defined on graphs. For example, in Kruskal's minimum cost spanning tree algorithm, one needs an ordered list of edges by their weights. In the following we describe work on traversal algorithms which provide such lists. We will refer to such algorithms as \emph{traversal with enumeration} algorithms.  Note at this point that the edge labelings in the traversal problem considered above are arbitrary. Also it is easy to see that the  breadth-first search and depth-first search algorithms can be adapted to traversals with enumeration. However, these algorithms are linear in space  as we mentioned already. We are interested in  most space-efficient traversal with enumeration algorithms. The lists produced by such algorithms are write-only for the algorithms and contain every vertex/edge exactly once. The information provided in graphs considered just as purely combinatorial objects is not enough to achieve this in the same $O(\log n)$ space as the original traversal algorithms. Instead, the research concentrated on graphs embedded in a space and, hence, considered them as geometric objects.  	

One of the first traversal algorithms with enumeration was described in \cite{gold1977}, which traverses triangulations of the plane. The main idea is used in all subsequent works, so we outline it here. One uses the geometric information from the embedding as part of the input to the traversal algorithm and using this information orders edges of the triangulation. The trick is that the query to compare two given edges can be locally computed in $O(\log n)$ space. Then each face will have an ``entering edge" (say, first in the order of the three edges) which will be used to traverse the triangulation as a tree. The embedding of the triangulation is given using the rotation system (a cyclic ordering of edges in the embedding  of the graph given at every vertex). These orderings are then used to traverse along faces using $O(\log n)$ space and search for the minimum edges, move from face to face, etc. Therefore such traversal can be performed in $O(\log n)$ space. Moreover, the geometric information can be further used to determine exact time when to output a visited vertex/edge to the list. In \cite{Avis1991}, authors describe a traversal algorithm with enumeration that can traverse all arrangements of convex polytopes. In \cite{Berg}, the algorithm is extended to all planar subdivisions and its running time is improved in \cite{bose}. All of the works in \cite{Avis1991,bose,Berg} consider plane subdivisions. First result on traversal with enumeration that extends to nonplanar graphs has been presented in \cite{Chavez}. The authors define a, so called, \emph{quasi-planar subdivision} as a graph (straight-line) embedded into the plane, whose vertex set can be partitioned into two sets $V_p$ and $V_c$, vertices in $V_p$ induce a plane graph $P$ (backbone), the outer-face of $P$ does not contain any vertex of $V_c$, and no edge of $P$ is crossed by any other edge. One can imagine a quasi-planar graph as a plane graph in which every face may contain an arbitrary graph which joins only to the vertices of the face. A quasi-planar subdivision is said to satisfy the \emph{left-neighbor rule} if every vertex of a subgraph inside of a face of its backbone has a neighbor on the face that is to the left of the vertex or has a smaller $x$-coordinate; see Figure \ref{quasiPL} for an example. It was proved in \cite{Chavez} that every quasi-planar subdivision that satisfies the left-neighbor rule can be traversed with enumeration in $O(\log n)$ space and polytime.
\begin{figure}[h]	

\begin{center}
\includegraphics[scale=0.65, angle=0]{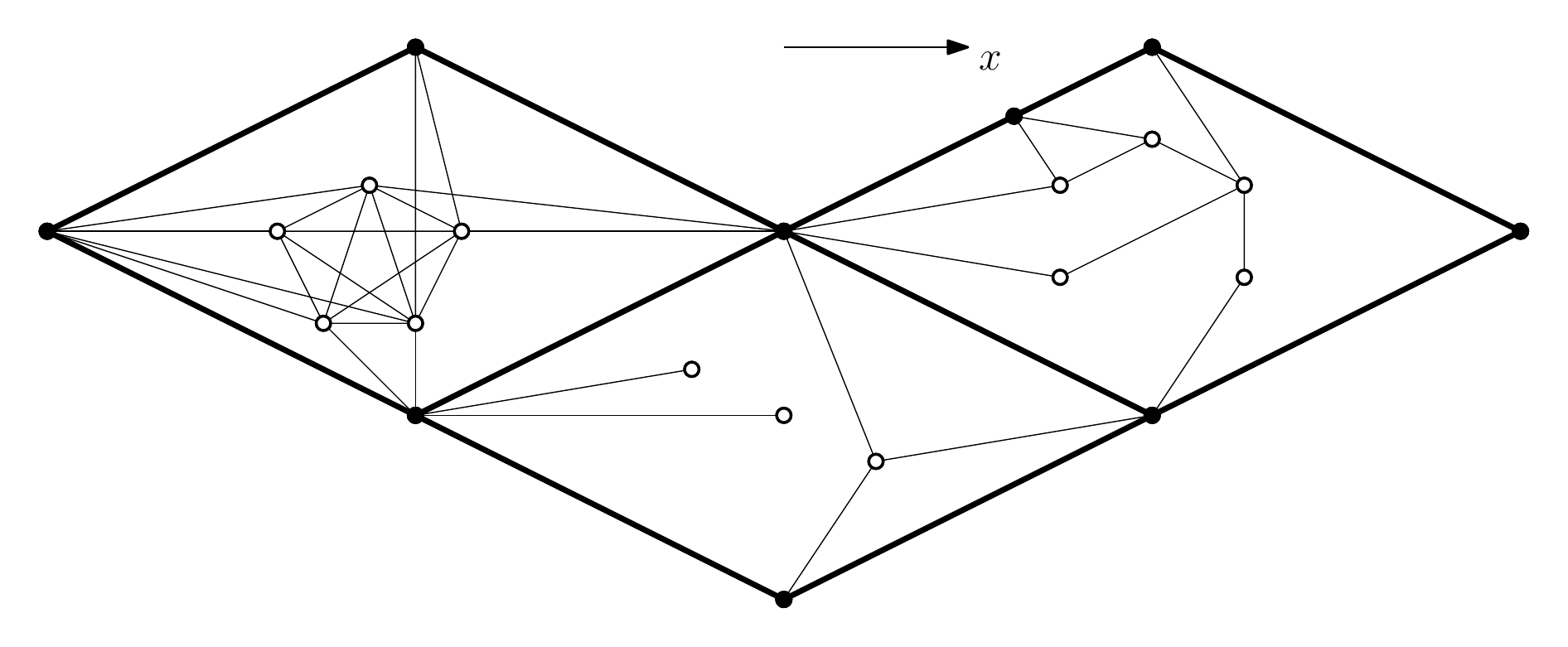}
\caption{\cite{Chavez} An example of a quasi-planar subdivision satisfying the left-neighbour rule. The bold edges and filled vertices are those of the underlying planar subgraph.}
\label{quasiPL}
\end{center}

\end{figure}

Note at this point that traversal algorithms are often modelled using so called walking (or jumping) automata on graphs \cite{Cook}. Such an automaton has a set of states, and a number of pebbles that represent vertex names and are used to mark certain vertices temporarily. The pebbles can be moved from vertex to adjacent vertex (“walk”) or directly to a vertex containing another pebble (“jump”). The position of the automaton on the graph is itself marked by a pebble. Thus, walking represents replacing a vertex name by some adjacent vertex found in the input, and jumping represents copying a previously recorded vertex name. It follows that the space required by such an automaton depends on the number of pebbles used, and obviously a walking automaton that uses  $c$ pebbles and can traverse a graph corresponds to an  $O(c\log n)$ space  traversal algorithm. In this paper we will model our traversal algorithm using a walking automaton.  	

We adapt the traversal algorithm from \cite{Chavez} to a traversal algorithm for geometric graphs which reports every vertex and every edge exactly once. To achieve this, we develop a walking automaton for local exploration of graphs straight-line drawn in the plane (Section~\ref{explore-N-k}). This automaton will visit all vertices within graph distance $k$ (for any fixed $k\in\mathbb{N}$) from a given vertex, and will use as few as $k+1$ pebbles, i.e., its space complexity is $O(k\log n)$.  	

\begin{definition}[basic notions]\
\begin{itemize}
\item Let $G=(V,E)$ be a graph. For every $v\in V$ we denoted the set of neighbors of $v$ in $G$ by $N_G(v)$ and define $N_G[v]=N_G(v)\cup \{v\}$. Moreover, for every $A\subseteq V$ we define $N_G[A]=\bigcup_{v\in A}N_G[v]$ and set $N_G(A)=N_G[A]\setminus A$.
\item A geometric graph is a (drawing of a) graph having a finite subset $V$ of the plane as its vertex set, and whose edge set consists of line-segments between all pairs of distinct points in $V$ with Euclidean distance $\le 1$. 
\item For every connected geometric graph $G$ of order $\ge 2$ we define 
\begin{equation*}\label{norm.of.G}
r(G)=\min_{\forall x,y\in V(G)} \{M:  1< d_E(x,y)< 2\sqrt 2 \Rightarrow d_G(x,y)\le M d_E(x,y)\},
\end{equation*} 
where $d_G$ and $d_E$ denote the graph distance in $G$ and the Euclidean distance in the plane, respectively.
\end{itemize}
\end{definition}
\noindent We shall use the invariant $r(G)$ to bound the space complexity of a traversal with enumeration algorithm for $G$. In particular, we provide such an algorithm that uses $\lfloor 2\sqrt{2} r(G)\rfloor$ pebbles. We call a geometric graph $G$ \emph{well-embedded} if $r(G)$ is a constant. Hence as  a consequence we obtain a traversal with enumeration algorithm with $O(\log n)$ space complexity for well-embedded geometric graphs.  	

The main idea behind such algorithm is to obtain from $G$, which in general can have many crossings, a quasi-planar graph satisfying the left-neighbor rule. We achieve this by fixing a Cartesian coordinate system and assuming, with no loss of generality (since $G$ is finite), that no vertex of $G$ has an integer coordinate and no edge of $G$ passes through a point in $\mathbb{Z}\times \mathbb{Z}$. Then we create a virtual graph which is an augmentation of $G$ and the grid graph given by the integer lattice. For traversing the resulting graph, we use an adaptation of the traversal algorithm in \cite{Chavez} combined with an algorithm (Algorithm \ref{k-th.neigborhood}) for exploring the $k$th neighborhood of a vertex in a graph straight-line drawn  in the plane. In the following section, we present the formal construction of the virtual graph.  	

\section{The Virtual Graph}\label{sec:virtual_graph}  	

In this section we provide details how to augment a geometric graph $G$ with a grid graph defined on the points of a sub-lattice of the plane.  	

\begin{definition}
For every graph $G$ and $k\in\mathbb{N}$, we define the $k$th neighborhood of a vertex $v$ in $G$ by
\begin{equation}
N_G^k[v]=\{u\in V(G): d_G(u,v)\le k\}.
\end{equation}
(We shall drop the subscript $G$ in $N_G^k[v]$ when $G$ is understood from the context.)
\end{definition}   	

Given a connected geometric graph $G$, we shall regard the infinite integral lattice in the plane as an infinite plane graph and denote it by $L$.
For every $v\in V(G)$ we define the {\em square of} $v$, denoted $s_v$, to be the face of $L$ that encloses $v$, and $\cor(s_v)$ to be the lower left corner of $s_v$.	

We combine $G$ and $L$ into a new graph $G\boxdot L$ as follows. Let $L_G$ be the smallest subgraph of $L$ whose face set contains every $s_v$, $v\in V(G)$.  Let $C(L,G)$ be the set of all edges of $G$ that cross an edge of $L$, and for every vertex $v\in V(G)$ let $\ell_v$ be the line-segment joining $v$ to $\cor(s_v)$. We now define the graph $G\boxdot L$ by
\begin{equation}
V(G\boxdot L)=V(G)\cup V(L_G),
\end{equation}
and
\begin{equation}
E(G\boxdot L)=(E(G)\setminus C(L,G))\cup E(L_G)\cup\{\ell_v:v\in V(G)\}.
\end{equation}  	

\begin{figure}[h]	

\begin{center}
\includegraphics[scale=0.7]{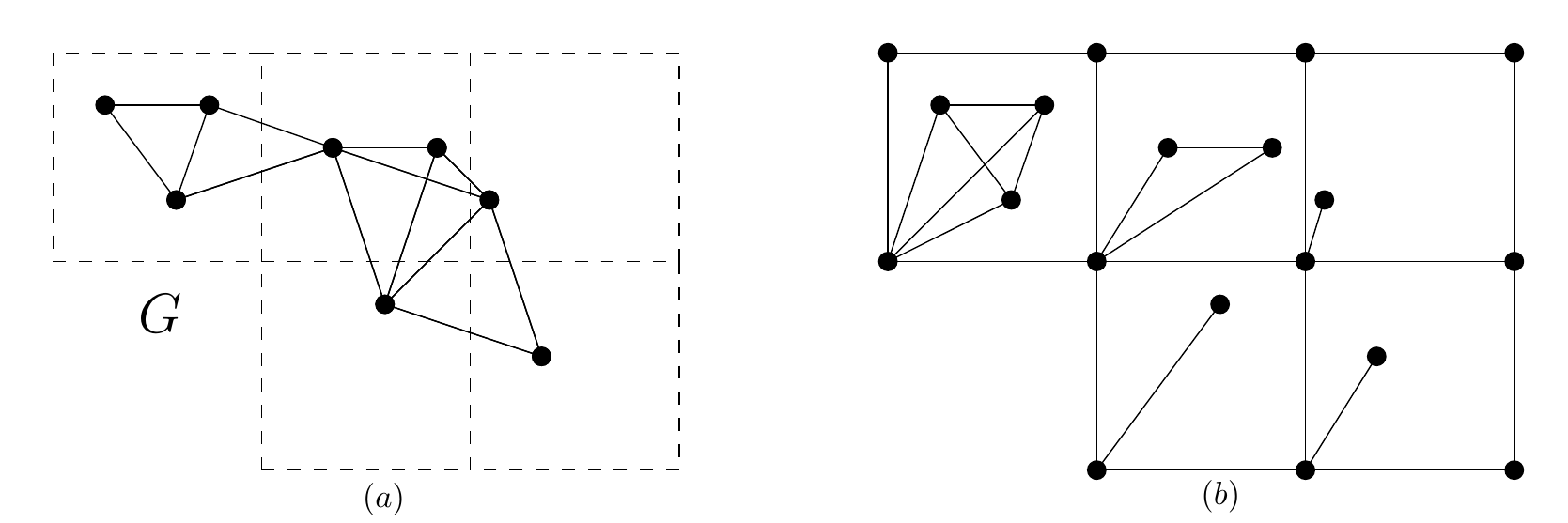}
\caption{(a): A geometric graph $G$ in the graph $L_G$, (b): The quasi-planar graph $G\boxdot L$.}
\label{fig: geo-to-qp}
\end{center}

\end{figure}  	

See Figure \ref{fig: geo-to-qp} for an example. Again, as $G$ is finite we may assume that no newly added edge $\ell_v$ passes through a vertex of $G$. Then, as the planar graph $L_G$ is an induced subgraph of $G\boxdot L$ and no edge of $L$ is crossed by any edge of $G$, it follows that $G\boxdot L$ is a quasi-planar graph with the underlying plane graph $L_G$. Moreover, as every vertex $v$ of $G$ is incident, via $\ell_v$, to a vertex of $L_G$ having a smaller $x$ coordinate than that of $v$, $G\boxdot L$ also satisfies the left-neighbor rule. The only problem with applying the quasi-planar subdivision traversal algorithm from \cite{Chavez} is that $G\boxdot L$ contains hypothetical vertices and edges which cannot be visited or used by a traversing agent. But whenever $r(G)$ (or an upper bound for it) is known, this problem will be resolved. The idea is that the \emph{traversing agent} on $G$, denoted $R^r$, will simulate a \emph{virtual agent}, denoted $R^v$, that will traverse $G\boxdot L$. To be able to perform this simulation in space bounded by $r(G)$, the traversing agent and the virtual agent must maintain a constant graph distance from each other, which turns out to be $\lfloor 2\sqrt{2}r(G)\rfloor$. Then, the local explorations by the virtual agent can be simulated by traversing agent using $\lfloor 2\sqrt{2}r(G)\rfloor$ pebbles. The virtual agent needs to know only a local portion of $G\boxdot L$ . Before each computation step of $R^v$ (e.g., a test to determine the next edge in the rotation system) the traversing agent $R^r$ explores a local portion of $G$ to provide the next arc of $G\boxdot L$ that $R^v$ has to test to determine the next step (e.g., moving along an edge). Determination of such an arc depends not only on the current position of $R^v$ but also on the last edge of $G\boxdot L$ which was traversed by $R^v$.  If $R^v$ is on a vertex of $G$, then $R^r$ starts exploration of the graph starting from that vertex. Moreover, in any step of the game if $R^v$ is moving on an edge of $G$, $R^r$ will make the same move. An algorithm for an exploration used by $R^v$ is described in Section \ref{explore-N-k} in a more general settings.


In the description of our algorithms each edge $e=uv$ is associated with two oppositely directed arcs $\mathbf{e}=(u,v)$ and its {\em reverse}, $\textbf{rev}(\mathbf{e}):=(v,u)$. As $G$ is assumed to be a graph straight-line drawn into the plane, we can define the {\em successor} of an arc $\mathbf{e}=(u,v)$, denoted $\textbf{succ}(\mathbf{e})$, as the first arc $(v,w)$ succeeding $\rev(\mathbf{e})$ counterclockwise around $v$. The {\em predecessor} of $\mathbf{e}$, denoted $\textbf{pred}(\mathbf{e})$, is the arc $\mathbf{e'}$ such that $\textbf{succ}(\mathbf{e'})=\mathbf{e}$. Furthermore, for every vertex (point) $v$ in the plane, we denote its $x$ and $y$ coordinates by $\xcor(v)$ and $\ycor(v)$, respectively. We shall also use the notation of the cone defined by a triple of the points in the plane, as follows. Suppose $a,b,$ and $c$ are points in the plane. We denote by $\angle (a,b,c)$ the counter-clockwise angle with apex $b$ from the ray $ba$ to the ray $bc$. Then, $\cone(a,b,c)$ is the cone with apex $b$ and interior angle $\angle(a,b,c)$, including the supporting ray passing through $a$ and not the one through $c$, see \cite{Chavez}. 	

\section{Local Exploration of Graphs Straight-Line Drawn in the Plane}\label{explore-N-k}   
Let $v$ be any vertex in a straight-line drawn graph $G$. Given $k\in\mathbb{N}$ we want to visit the $k$th neighborhood $N_G^{k}[v]$ of $v$. We shall follow the depth-first search tree of $G$ to distribute a set of $k+1$ pebbles (initially located at $v$) among vertices of the paths of the DFS tree that start from the root $v$, only backtracking when we reach a vertex in $N^k_G(v)$ or when every candidate for a ``new" vertex is adjacent to an already pebbled vertex. Here is a more detailed description of the algorithm. 

\noindent Here is a more detailed description of the algorithm. Consider a graph $G$ with a given straight-line drawing in the plane. On input 


\begin{itemize}
\item any $v\in V(G)$, and
\item $k+1$ pebbles initially located at $v$,
\end{itemize}
our algorithm {\em visits} every vertex in $N^k_G[v]$ at least once and does not visit any vertex outside $N^k_G[v]$. Visiting a vertex is defined by moving at least one pebble from a neighboring pebbled vertex to this vertex (by which this vertex will be considered visited then). We move pebbles around in $N^k_G[v]$ using the operators $\succc$ and $\rev$, in a depth-first-search fashion.   	

At any stage (or iteration) the algorithm will be either in \emph{forward} (fw) or \emph{backward} (bw) {\em mode}, and on an arc $\mathbf{e}$ of $G$.  Thus, at any time we can define the {\em state} $(P,\mathbf{e},mode)$, where $P$ is the set of currently pebbled vertices (i.e., vertices where the algorithm stores at least one pebble), $\mathbf{e}$ is the current arc, and $mode\in\{\text{fw},\text{bw}\}$. The computation of the algorithm can be described as a sequence of such states.

Exploration in our algorithm can begin on any arc of $G$ with tail $v$. But for definiteness we set $\mathbf{e_0}$ as the arc with $\tail(\mathbf{e_0})=v$ having the smallest counterclockwise angle with the $x$-axis and set the initial state to be $(\{\tail(\mathbf{e_0}),\head(\mathbf{e_0})\},\mathbf{e_0},\text{fw})$.	

In our algorithm, changing from a state $(P,\mathbf{e},mode)$ to $(P',\mathbf{e'},mode')$ depends on $mode$, adjacency of vertices in $N(\head(\mathbf{e}))$ to the other pebbled vertices, and whether or not $\card(P)=k+1$. Moreover, in such a change of states we will always have $\card(P\triangle P')=1$ and $\head(\mathbf{e})=\tail(\mathbf{e'})$.   	

To visit all vertices in the $k$th neighborhood of $v$, we choose the arc $\mathbf{e_0}$ starting at $v$ as we described above, and initially put all pebbles at $v$. The algorithm then starts moving pebbles in a DFS way, and in such a way that at any time, the pebbled vertices form the vertex set of an induced path in $G$. This  condition is essentially maintained by Steps 10-14 of the algorithm in the forward mode, and by Steps 29-33 in the backward mode. By Steps 10-14, when in a state $(P,\mathbf{e},\text{fw})$, $\tail(\mathbf{e})$ is the only pebbled vertex which is adjacent to $\head(\mathbf{e})$. Furthermore, the mode in the next iteration remains forward only if there is an arc succeeding $\mathbf{e}$ whose head is an unpebbled vertex with no pebbled neighboring vertex except $\head(\mathbf{e})$. If no such arc exist, the algorithms backtracks along $\rev({\mathbf{e}})$ and the following state will be $(P\setminus\{\head(\mathbf{e})\}, \rev({\mathbf{e}}),\text{bw})$ (Steps 15-18). Transition from a state with the backward mode is similar (Steps 29-33 and 36-38). The algorithm terminates when it returns to the arc $\mathbf{e_0}$, by which time all of the vertices in the $k$th neighborhood of $v$ will have been visited. We establish this fact, among other things, in Theorem \ref{thm: k-th-neighbourhood}.



\begin{figure}[H]	

\begin{center}
\includegraphics[scale=0.77]{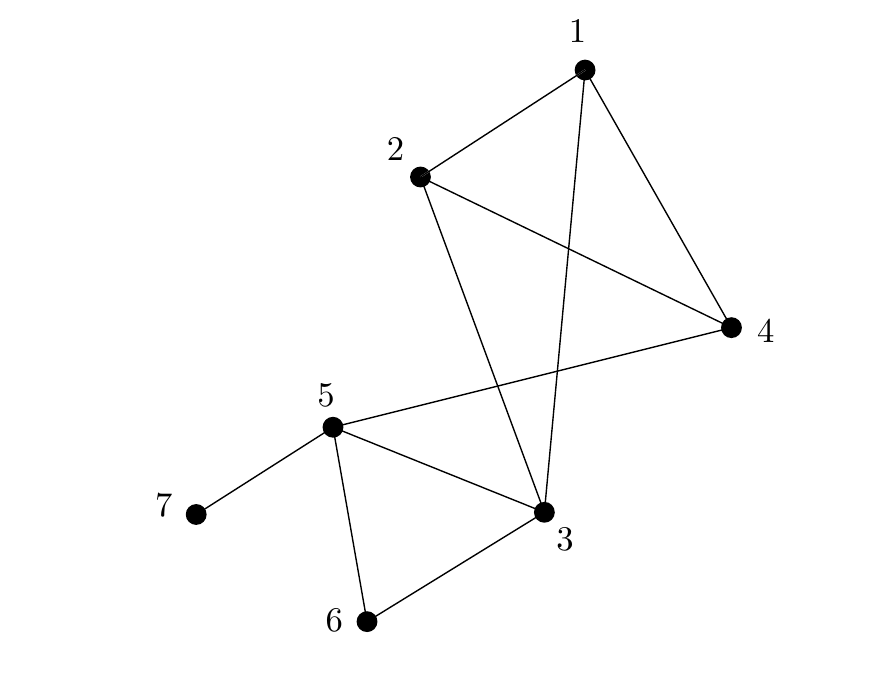}
\caption{An instance of Algorithm \ref{k-th.neigborhood} on input vertex 1 with the task of exploring the second neighborhood of vertex 1 that produces a list of 12 states before it terminates.}
\begin{tabular}{lll}\small
$1:(\{1,2\},(1,2),\text{fw})$  & \small$2:(\{1\},(2,1),\text{bw})$ & \small$3:(\{1,3\},(1,3),\text{fw})$\\
\small$4:(\{1,3,5\},(3,5),\text{fw})$ &\small $5: (\{1,3\},(5,3),\text{bw})$ &\small $6:(\{1,3,6\},(3,6),\text{fw})$\\
\small$7: (\{1,3\},(6,3),\text{bw})$ & \small$8:(\{1\},(3,1),\text{bw})$ &        \small $9:(\{1,4\},(1,4),\text{fw})$\\
\small$10: (\{1,4,5\},(4,5),\text{fw})$ \small& $11: (\{1,4\},(5,4),\text{bw})$ &\small $12: (\{1\},(4,1),\text{bw})$
\end{tabular}\normalsize
\label{k-explor}
\end{center}

\end{figure}	
\noindent Figure \ref{k-explor} shows an instance of Algorithm \ref{k-th.neigborhood} for exploring the second neighborhood of vertex 1 in the given graph. The algorithm first recognizes $\mathbf{e}_0=(1,2)$ as the current arc in the initial state, and places all pebbles but one on $\head(\mathbf{e}_0)$. Starting from the initial state $(\{1,2\},(1,2),\text{fw})$, it eventually reaches the state $(\{1\},(4,1),\text{bw})$ where all the pebbles are back on vertex $1$. Since $\mathbf{e}_0=\succc((4,1))$ that is the terminal state, according to Step 34 in Algorithm \ref{k-th.neigborhood}.


\begin{algorithm}[H]
\caption{Visiting all vertices in $N^k[v]$}\label{k-th.neigborhood}
\textbf{Input:} a straight-line drawing of a graph $G$ in the plane, a vertex $v\in V(G)$ with $k+1$ pebbles on $v$  	

\textbf{Objective:} Visit all vertices in $N^k[v]$
\begin{algorithmic}[1]
\State denote the set of currently pebbled vertices by $P$.
\State let $\mathbf{e_0}$ be the arc with $v=\tail(\mathbf{e_0})$ and the smallest counterclockwise angle with $x$-axis
\State $mode\gets \text{fw}$
\State\label{a1:4} Keep one pebble at $\tail(\mathbf{e_0})$ and move the rest to  $\head(\mathbf{e})$
\State\label{a1:5}$P=\{\head(\mathbf{e_0}),\tail(\mathbf{e_0})\}$
\State $\mathbf{e}\gets \mathbf{e_0}$
\Repeat
\If {$mode=\text{fw}$}
\If {$|P|<k+1$}	

\State $\mathbf{e'}\gets \succc(\mathbf{e})$
\While {$\head(\mathbf{e'})\in (N(P\setminus\{\head(\mathbf{e})\})\setminus P)$}
\State $e'\gets\succc(\rev(\mathbf{e'}))$
\EndWhile
\State $\mathbf{e}\gets \mathbf{e'}$
\If {$\head(\mathbf{e})\in P$}
\State move all pebbles on $\tail(\mathbf{e})$ to $\head(\mathbf{e})$
\State $P=P\setminus\{\tail(\mathbf{e})\}$
\State $mode\gets\text{bw}$ 
\Else
\State move all but one of the pebbles on $\tail(\mathbf{e})$ to $\head(\mathbf{e})$
\State $P=P\cup\{\head(\mathbf{e})\}$
\EndIf
\Else

\algstore{traversal}
\end{algorithmic}
\end{algorithm}
\clearpage
\begin{algorithm} 	

\caption{Visit all vertices in $N^k[v]$ (continued)}\label{k-th.neigborhood2}
\begin{algorithmic}[1]
\algrestore{traversal}
\State move all pebbles from $\head(\mathbf{e})$ to $\tail(\mathbf{e})$
\State $\mathbf{e}\gets\rev(\mathbf{e})$
\State $mode\gets \text{bw}$
\EndIf
\Else \Comment{When $mode=\text{bw}$}
\State $\mathbf{e'}\gets \succc(\mathbf{e})$
\While {$\head(\mathbf{e'})\in (N(P\setminus\{\head(\mathbf{e})\})\setminus P)$}
\State $\mathbf{e'}\gets\succc(\rev(\mathbf{e'}))$
\EndWhile
\State $\mathbf{e}\gets \mathbf{e'}$
\If {$\mathbf{e}=\mathbf{e_0}$} move all pebbles on $v$ and terminate.
\EndIf
\If {$\head(\mathbf{e})\in P$} 	

\State move all pebbles on $\tail(\mathbf{e})$ to $\head(\mathbf{e})$
\State $P=P\setminus\{\tail(\mathbf{e})\}$
\Else 
\State move all but one of the pebbles on $\tail(\mathbf{e})$ to $\head(\mathbf{e})$
\State $P=P\cup\{\head(\mathbf{e})\}$
\State $mode=\text{fw}$
\EndIf
\EndIf
\Until {$\mathbf{e}=\mathbf{e_0}$} \label{a1:43} 
\end{algorithmic}
\end{algorithm}
\begin{theorem}\label{thm: k-th-neighbourhood}
Let $G$ be graph straight-line drawn in the plane, $v\in V(G)$, and $\mathbf{e_0}$ an arc of $G$ with $\tail(\mathbf{e_0})=v$. Then the following holds:  
\begin{enumerate}
\item[(a)] For every iteration $i$,
\item [(b)]The states at different times are all distinct.
\item [(c)]The algorithm terminates after a finite number of iterations.
\item [(d)]For every $u\in N^k[v]$ there is an iteration $i$ such that $u\in P_i$. 
\end{enumerate}
\end{theorem}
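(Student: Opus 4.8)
The plan is to prove one structural invariant and then read off (b)--(d) from it. I take statement (a) to assert that at every iteration $i$ the pebbled set $P_i$ is the vertex set of an induced path of $G$ with one endpoint $v$ and other endpoint the \emph{tip} $\head(\mathbf{e}_i)$, that $|P_i|\le k+1$, and that in forward mode $\tail(\mathbf{e}_i)$ is the unique pebbled neighbour of $\head(\mathbf{e}_i)$, while in backward mode $\tail(\mathbf{e}_i)$ is the vertex just unpebbled. I would prove this by induction on $i$. The base case is the initial state $(\{v,\head(\mathbf{e_0})\},\mathbf{e_0},\text{fw})$, a path of length $1$. For the step I analyse each branch. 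The crucial observation is that the \textbf{while} loops advance $\mathbf{e'}$ past exactly those out-arcs of the tip $t:=\head(\mathbf{e})$ whose head lies in $N(P\setminus\{t\})\setminus P$; hence the finally accepted head is either already in $P$ or has no pebbled neighbour besides $t$. In the former case the induction hypothesis (inducedness) forces that head to be the penultimate path vertex, so the step backtracks and shortens the path; in the latter case we append a new vertex adjacent to $P$ only at $t$, so the path stays induced. The branch that caps $|P|$ at $k+1$ merely reverses the current arc and preserves everything. This gives (a).

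Next I would organise the computation around the finite tree $T$ whose nodes are the induced paths of $G$ from $v$ of length at most $k$, where $(v,\dots,p_{m-1},p_m)$ is a child of $(v,\dots,p_{m-1})$, and the children of a node are ordered by the counterclockwise rotation of the out-arcs of its tip starting just after the arc back to its parent. Here $T$ is finite because $G$ is locally finite, so $N^k[v]$ and the induced paths inside it are finite. The heart of the argument is to show that the algorithm performs an ordinary depth-first traversal of $T$: by (a) the only out-arc of the tip whose head lies in $P$ is the arc to the parent, so as $\succc$ sweeps the out-arcs counterclockwise it selects each admissible child once and, on returning to the parent arc, backtracks. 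The root $\{v\}$ is handled by the same mechanism with $\mathbf{e_0}$ playing the role of the ``parent arc'': the traversal starts at $\mathbf{e_0}$ forward and halts (Step 34) precisely when the sweep around $v$ returns to $\mathbf{e_0}$, that is, after all subtrees at $v$ are explored.

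With this correspondence, (b), (c), (d) follow from standard facts about tree traversals. For (c), a depth-first traversal of the finite tree $T$ enters each node once (a forward state) and revisits it once after each child returns (a backward state), so it performs at most $2|V(T)|$ iterations and terminates. For (b), I would exhibit the map sending each state to its traversal event: a forward state at a node is entered along the unique parent arc, whereas a backward state records through $\tail(\mathbf{e}_i)$ the specific child just returned from; since $P_i$ fixes the node and $\mathbf{e}_i$ together with $mode$ fixes the event, distinct events give distinct states and no state recurs. For (d), given $u\in N^k[v]$ I take a shortest $v$--$u$ path; being shortest it is induced and has length $d_G(v,u)\le k$, so it is a node of $T$ all of whose prefixes are induced paths and hence ancestors in $T$. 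Since the traversal visits every node of $T$, it reaches this node, and at that iteration $u\in P$.

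The main obstacle is the claim that the algorithm is a faithful depth-first traversal of $T$; everything else is routine once it is established. It requires two careful checks against the pseudocode: first, that between two consecutive visits to a node the $\succc$/\textbf{while} mechanism sweeps each out-arc of the tip exactly once, so no admissible child is skipped or taken twice; and second, that backtracking is triggered by, and only by, the sweep returning to the parent arc, which in turn relies on the induced-path invariant guaranteeing that the parent is the tip's only pebbled neighbour. Care is also needed at the two boundary cases: the root, where there is no parent and termination is governed by $\mathbf{e_0}$, and the depth-$k$ nodes, where the $|P|=k+1$ cap suppresses further descent without affecting which vertices are pebbled.
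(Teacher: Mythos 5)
Your proposal is correct, but it reaches parts (b)--(d) by a genuinely different route than the paper. Part (a) is the same induction in both (and your reading of the truncated statement --- that $G[P_i]$ induces a path from $v$ to $\head(\mathbf{e}_i)$ --- matches what the paper actually proves). From there the paper argues each part separately: for (b) it supposes a first repeated state $state_*=(P_*,\mathbf{e_*},mode_*)$ with $\mathbf{e_*}=(a,b)$, shows the two predecessor arcs $(c_{i-1},a)$ and $(c_{j-1},a)$ must have distinct tails and opposite modes, and derives a contradiction from the counterclockwise order: $c_{j-1}\in\cone(c_{i-1},a,b)$ or $c_{i-1}\in\cone(c_{j-1},a,b)$ would have forced the sweep to select the corresponding reverse arc before $\mathbf{e_*}$; part (c) then follows from (b) plus finiteness of the state space; and (d) is proved by induction on the number of pebbles, comparing runs with $i+1$ and $i+2$ pebbles through the monotone family $\mathscr{P}_i$ of ever-pebbled vertices. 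You instead establish once that the execution is exactly the (rooted, ordered) depth-first traversal of the tree $T$ of induced paths of length at most $k$ starting at $v$, and read off all three parts from that single correspondence. The correspondence is sound: by (a) the pebbled set is always an induced path, so the \textbf{while}-loop accepts an out-arc of the tip exactly when its head is the parent (backtrack) or yields another node of $T$ (descend), and the sweep beginning at $\succc$ of the arrival arc covers every out-arc of the tip before wrapping around to the parent arc. Your route buys three things: it is purely combinatorial, using only the rotation system and never the straight-line geometry on which the paper's cone argument rests; it gives an explicit bound of roughly $2|V(T)|$ iterations rather than bare termination; and it proves (d) within a single run, avoiding the paper's delicate cross-run comparison (their claim that $u$ is pebbled before $\head(\mathbf{e'})$ is unpebbled requires care precisely where your shortest-path argument is automatic, since shortest paths and all their prefixes are induced and of length at most $k$). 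The paper's route is in turn more local, needing no global tree formalism. One small point you should make explicit in (b): the unordered set $P_i$ determines the node of $T$ because $G[P_i]$ is a path with endpoint $v$ and therefore admits a unique vertex ordering starting at $v$.
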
   	

\begin{proof}
We denote the state in every $i$th 
iteration ($i\in\mathbb{N}\cup\{0\}$) by $state(i)=(P_i,\mathbf{e_i},mode_i)$, where $i=0$ corresponds to the initial state, so that $state(0)=(\{v\},\mathbf{e_0},\text{fw})$. \textbf{(a) }We use induction on $i$. The claim holds for $i=0$ and $1$, since $G[P_0]$ is a single vertex, and $G[P_1]$ is a 2-path between $v$ and $\head(\mathbf{e_0})$. Suppose $j\in \mathbb{N}$ and the claim holds for every iteration $i<j$. In the case $mode_j=\text{fw}$, $\head(\mathbf{e_{j-1}})=\tail(\mathbf{e_j})$ is the only vertex in $P_{j-1}$ which is adjacent to $\head(\mathbf{e_j})$. Hence, by induction hypothesis, $G[P_{j}]$ is an induced path between $v$ and $\head(e_j)$. On the other hand, if $mode_j=\text{bw}$ then $P_j:=P_{j-1}\setminus\head(\mathbf{e_{j-1}})$ where $\head(e_{j-1})=\tail(e_{j})$. 
Hence,  $G[P_j]$ is the sub-path of $G[P_{j-1}]$ between $v$ and $\head(\mathbf{e_j})$ and the claim follows by the induction hypothesis.\textbf{(b) }Suppose by the way of contradiction, that there is a repeated state and let $state_*:=(P_*,\mathbf{e_*},mode_*)$ be the first state that occurs at least twice during an application of the algorithm. Let iterations $i$ and $j$ be the first two iterations whose states are equal to $state_*$. Note that because of the termination criterion (Step 34), $state(0)$ cannot be repeated; thereby, $i,j>0$. It is not difficult to observe that under our assumptions and due to how the algorithm extends paths, we have
\begin{equation*}
\head(\mathbf{e_{i-1}})=\head(\mathbf{e_{j-1}})=\tail(\mathbf{e_*}).
\end{equation*}Let

\begin{equation*}
\mathbf{e_*}=(a,b)\quad\&\quad \mathbf{e_k}=(c_k,a)\quad (\text{for }k\in\{i-1,j-1\}).
\end{equation*}
Then, by the choice of iterations $i$ and $j$ we have 
\begin{equation}\label{temp1}
c_{i-1}\not=c_{j-1}.
\end{equation}
Indeed, if $c_{i-1}=c_{j-1}$, then it follows that $P_{i-1}=P_{j-1}$ and $mode_{i-1}=mode_{j-1}$. 	

Moreover, for $k\in\{i-1,j-1\}$ we have $c_k\in P_*$ iff $mode_k=\text{fw}$. It follows from this that $c_k\not\in N(P_*\setminus \{a,b\})$ whenever $mode_k=\text{bw}$. Observe that in light of part (a) we also have
\begin{equation*}
\text{bw}\in\{mode_k: k\in\{i-1,j-1\}\},
\end{equation*}
for otherwise we would have $c_{i-1}=c_{j-1}$, contradicting \eqref{temp1}. Furthermore, since either $c_{i-1}\in \cone(c_{j-1},a,b)$ or $c_{j-1}\in \cone(c_{i-1},a,b)$, we also have 
\begin{equation*}
\text{fw}\in\{mode_k: k\in\{i-1,j-1\}\}.
\end{equation*}
Hence, we may assume that
\begin{equation}
mode_{i-1}=\text{fw}\qquad \&\qquad mode_{j-1}=\text{bw}.
\end{equation}
Note that we have
\begin{equation}
c_{j-1}\in \cone(c_{i-1},a,b)\qquad \text{or}\qquad c_{i-1}\in \cone(c_{j-1},a,b).
\end{equation}
But $c_{j-1}\in \cone(c_{i-1},a,b)$ (resp. $c_{i-1}\in \cone(c_{j-1},a,b)$) implies in iteration $i$ (resp. $j$) the algorithm would choose $\rev(\mathbf{e_{j-1}})$ (resp. $\rev(\mathbf{e_{i-1}})$) ahead of $\mathbf{e_*}$, contradicting that $state(i)=state_*=state(j)$. Therefore, no two states can be equal. \textbf{(c) }Since the set of possible states is finite, it follows by part (b) that the termination condition $\mathbf{e}=\mathbf{e_0}$ will hold after a finite number of states. \textbf{(d) }For every $i\in \mathbb{N}$ let $\mathscr{P}_{i+1}$ be the union of all first components of the states occurred during an application of the algorithm with $k=i+1$ pebbles
 . As every pebbled vertex is joined to $v$ via a path of pebbled vertices, we have $\mathscr{P}_{i+1}\Seq N^{i}[v]$ for each $i$. We use induction on $i$ to show that the reverse inclusion also holds. For $i=2$ it is easy to see that every vertex in $N(v)$ appears in $\mathscr{P}_2$
 ; i.e. $\mathscr{P}_2\supseteq N(v)$. Furthermore, since for every $i$ every state occurred using $i+1$ pebbles will also occur using more than $i+1$ pebbles, $\{\mathscr{P}_i\}$ is an increasing sequence with respect to $\Seq$. Now, suppose $\mathscr{P}_{i+1}= N^{i}[v]$ for some $i$ and let $u\in V(G)$ such that $d_G(v,u)=i+1$ and consider a vertex $w\in N(u)\cap N^{i}[v]$. By the induction hypothesis and that $\mathscr{P}_i\Seq \mathscr{P}_{i+1}$, there is an iteration $j$ with state $(P',e',mode')$ in the application of the algorithm with $i+1$ pebbles where $w$ is pebbled for the first time. As such, we will have $mode'=\text{fw}$ and the vertex $\head(\mathbf{e'})$ will have two pebbles in iteration $j$. But then all vertices in $N(w)\setminus (N(P'\setminus\{\head(\mathbf{e'})\})$, including $u$, will be pebbled after iteration $j$ and before $\head(\mathbf{e'})$ is unpebbled.
\end{proof} 

\section{A Traversal Algorithm with Enumeration for Geometric Graphs}\label{alg}
Throughout this section, $G$ is a fixed geometric graph. We shall provide an algorithm for traversal of $G$ using $G\boxdot L$, as defined in Section \ref{sec:virtual_graph}, using a real robot $R^r$ and a virtual robot $R^v$. The virtual robot  $R^v$ essentially does the task of traversing $G\boxdot L$, while $R^r$ moves along edges in $G$ with the objective of simulating $G\boxdot L$ for $R^v$. \\

\noindent The following conditions will be maintained by the algorithm:\begin{itemize}
\item$R^r$ will simulate $R^v$ which will traverse $G\boxdot L$,
\item $R^r$ moves along edges of $G$ and does the task of local exploration of $G$ to ,
\item in the traversal phase, the Euclidean distance between $R^r$ and $R^v$ can be maintained within the constant value of $2\sqrt{2}$,
\item in the exploration phase $R^r$ can efficiently get back to its initial position (at the start of the exploration phase), and
\item the algorithm reports every vertex or edge of $G$ exactly once.
\end{itemize}
The entire algorithm consists of two main phases: the {\em preliminary phase} with the goal of finding the minimum edge $e_{\min}$ of $G\boxdot L$, as defined in \cite{Chavez} for quasi-planar graphs, and the {\em traversal phase} with the goal of traversing $G\boxdot L$ and reporting every vertex and edge of $G$ exactly once. In both phases, $R^v$ runs traversal on $G\boxdot L$ via executing $\succc$. In order for $R^v$ to execute $\succc(\mathbf{e})$ for some arc $\mathbf{e}$, $R^r$ finds $\mathbf{e}$, essentially by running Algorithm \ref{k-th.neigborhood}. Similarly, all other queries are handled by $R^r$.
\\
\noindent\textbf{Preliminary Phase:}
Since, as one can easily see, $\mathbf{e_{\min}}$ is also the minimum arc of $L_G$,  it suffices to perform the traversal algorithm on $L_G$ until $\mathbf{e_{\min}}$ is found. To this end, in stage 0 of the preliminary phase we put $R^r$ at any vertex $v\in V(G)$, choose the minimum arc of $s_v$ as the starting arc, and place $R^v$ in its head. In every subsequent stage, $R^v$ executes $\succc(\mathbf{e})$ for some arc $\mathbf{e}$, which requires $R^r$ to find $\succc(\mathbf{e})$ and provide it to $R^v$ by running Algorithm \ref{k-th.neigborhood}. Note that traversing  $L_G$ can be carried out in accordance with the traversal algorithm in either\cite{Chavez} or in \cite{bose}. In addition, since the vertices of $L_G$ are all virtual and $R^r$ has to remain in $V(G)$, in no stage of the preliminary phase will $R^r$ and $R^v$ be in the same vertex of $G\boxdot L$. At the end of a stage of the traversal $R^r$ either stays in its current face of $L_G$ or leaves it to a neighboring face, as described below:
\begin{enumerate}
\item If $R^v$ has not left to a new face, $R^r$ does not move. 
\item If $R^v$ has left to a new face on an arc $\mathbf{e}$, then starting from its current vertex, $R^r$ follows Algorithm \ref{k-th.neigborhood} with $k=\lfloor 2\sqrt{2} r{(G)}\rfloor$ until a vertex of $G$ in one of the two faces of $L_G$ that contain $\mathbf{e}$ is found.  
\end{enumerate}
The final stage of the preliminary phase is the one in which $\mathbf{e_{\min}}$ is reached by the traversing robot $R^v$.

\begin{algorithm}[H]
\caption{Traversing a Geometric Graphs $G$}\label{DBGeomTrav}
\textbf{Input:} A graph $G\in \mathscr{G}$ where $\mathscr{G}$ is a class of well-embedded geometric graphs, and a vertex $v\in V(G)$   	

\textbf{Output:} The list of (coordinate of) vertices of $G$
\begin{algorithmic}[1]
\State let $e$ be the lower arc of $s_v$ pointing away from $\cor(s_v)$
\State $\mathbf{e}\gets \mathbf{e_0}$
\State $R^r\gets v$, $R^v\gets \head(\mathbf{e})$
\Repeat 
\Comment{finds $e_{\min}$}
\State $\mathbf{e}\gets \rev(\mathbf{e})$
\State $R^v\gets \head(\mathbf{e})$
\While {$\mathbf{e}\neq\entry_{L_G} (\qface_{L_G} (e) )$}
\State $\mathbf{e}\gets \succc _{L_G}(e)$
\State $R^v\gets \head(\mathbf{e})$
\State $\rep$ $R^r$ if necessary
\EndWhile
\Until {$\mathbf{e}=\mathbf{e_{\min}}$}
\State $p\gets \lefttt(\mathbf{e})-(0,1)$
\Repeat 
\Comment{start the traversal on $G\boxdot L$}
\State $\mathbf{e}\gets \succc (\mathbf{e})$
\State $R^v\gets \head(\mathbf{e})$
\State $\rep$ $R^r$ if necessary
\If {$p\in\cone(\tail(\mathbf{e}),\head(\mathbf{e}),\head(\succc(\mathbf{e})))$}
\State \text{\textbf{report }}{$\tail(\mathbf{e})$}
\State $R^v\gets \head(\mathbf{e})$
\State \text{\textbf{report }} every edge $\tail(\mathbf{e})w\in C(L_G)$ such that in the lexicographic order $(\xcor(\tail(\mathbf{e})),\ycor(\tail(\mathbf{e}))<(\xcor(w),\ycor(w))$
\EndIf
\If {$e\in E(G)$ and ($|\tail(\mathbf{e})p|<|\head(\mathbf{e})p|$ or ($|\tail(\mathbf{e})p|=|\head(\mathbf{e})p|$ and $\overrightarrow{\tail(\mathbf{e})p}<\overrightarrow{\head(\mathbf{e})p}$)}
\State \text{\textbf{report }}{$e$}
\EndIf
\If {$\mathbf{e}=\entry(\qface(\mathbf{e}))$}
\State $\mathbf{e}\gets \rev{(\mathbf{e})}$
\Else
\If {$\rev{(\mathbf{e})}=\entry(\qface(\rev(\mathbf{e}))$}
\State $\mathbf{e}\gets \rev{(\mathbf{e})}$
\EndIf
\EndIf   
\Until {$\mathbf{e}=\mathbf{e_{\min}}$}
\end{algorithmic}
\end{algorithm}
\noindent\textbf{Traversal Phase:}   	

The final stage of the preliminary phase defines stage 0 of the traversal phase. During this phase, as opposed to the preliminary one, $R^v$ traverses the entire graph $G\boxdot L$, but $R^r$ uses the same shadowing rules as in the preliminary phase to get within the Euclidean distance of $\sqrt{2}$ from $R^v$ at the end of each stage. Moreover, whenever $R^v$ gets to a vertex of $G$, $R^r$ runs Algorithm \ref{k-th.neigborhood} until it gets to the same vertex where $R^v$ is positioned. For reporting vertices and edges we use the same criterion as the one described in \cite{Chavez} with the exception that hypothetical vertices, i.e. vertices of $L_G$, and edges, i.e. edges $L_G$ and the ones in $\{\ell_v:v\in V(G)\}$, will not be reported. Finally, an edge $st\in C_L$ with $(\xcor(s),\ycor(s))<(\xcor(t),\ycor(t))$ gets reported whenever $s$ is reported. 

\section{Minimum Spanning Tree in Bounded Space}\label{sec:example}One of the problems that can be tackled using local traversal algorithms is the problem of finding minimum spanning trees in log space for certain classes of graphs. In this section we present Algorithm \ref{min.sp.tree} for finding the minimum spanning tree for connected graphs straight-line embedded in the plane with an injective edge-weight assignment. Likewise Kruskal's algorithm \cite{Kruskal}, Algorithm \ref{min.sp.tree} considers edges in the increasing order of their weights. Utilizing local traversal algorithms without mark bits, Algorithm \ref{min.sp.tree} has the advantage of not requiring global information such as the order or size of the graph under consideration- all it needs is to keep track of the weight of the ``current" candidate for being included in the minimum spanning tree, and a couple of pebbles for distinguishing between its end vertices.  	

Let $G$ be a connected planar graph straight-line embedded in the plane with an injective edge-weight assignment $w:E(G)\to (0,\infty)$. Algorithm \ref{min.sp.tree} uses two auxiliary functions $\mathbf{next.weight}$ defined on $\{w(e):e\in E(G)\}\cup\{0\}$ and $\mathbf{discard.weight}$, defined on $\{w(e):e\in E(G)\}$. For every $w\in \{w(e): e\in E(G)\}$ we define $\mathbf{next.weight}(w)$ to be 0 if no edge of $G$ has a weight $>w$, and to be $\min\{w(e): e\in E(G)\text{ and } w(e)>w\}$ otherwise. Moreover, we define $\mathbf{discard.weight}(w)$ to be 1 if there is a path in $G$ between the end-points of the edge with weight $w$ that has all of its edge-weights $< w$; otherwise we put $\mathbf{discard.weight}(w)=0$. Each of these functions can be implemented by an adaptation of the traversal algorithm in \cite{bose} starting from any current vertex. As such, $\mathbf{next.weight}(w)$ require a traversal of $G$ with keeping track of the tentative lightest edge having weight $>w$, while $\mathbf{discard.weight}(w)$ requires a, possibly partial, traversal of the component of the current vertex in the graph obtained from $G$ by removing every edge of a weight $\ge w$. 	

\begin{algorithm}
\caption{Finding a Minimum-Weight Spanning Tree $T$ of $G$}\label{min.sp.tree}
\begin{algorithmic}[1]
\State $x\gets \mathbf{next.weight}(0)$
\State $W_T\gets\{x\}$
\While {$x\not=0$}
\If {$\mathbf{next.weight}(x)=0$} \Return $W_T$
\Else
\State $x\gets \mathbf{next.weight}(x)$
\EndIf
\If {$\mathbf{discard.weight}(x)=0$} $W_T\gets W_T\cup \{x\}$
\EndIf
\EndWhile
\end{algorithmic}
\end{algorithm} 	

\subsection{Validity of the Algorithm} 	

\begin{lemma}\label{validity.min.sp}
Let $G$ be a connected graph with an injective edge-weight assignment $w$, and let $T$ be the minimum-weight spanning tree of $G$ resulting from Kruscal's algorithm. Then, for every edge $e$ of $G$ we have $e\not\in E(T)$ iff there is a cycle $C\Seq G$ containing $e$ with every edge of a wight $\le w(e)$.
\end{lemma}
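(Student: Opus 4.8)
The plan is to trace the execution of Kruskal's algorithm and exploit two facts. First, since the weight assignment $w$ is injective, the algorithm examines the edges in a strict total order by weight, adding an edge to $T$ exactly when its two endpoints lie in distinct components of the partial forest built so far, and declining it otherwise. Second, throughout the execution the connected components of the partial forest only merge and never split, so once two vertices become connected they remain connected. Note also that because $w$ is injective, requiring every edge of the cycle $C$ to have weight $\le w(e)$ is equivalent to requiring every edge of $C$ other than $e$ itself to have weight strictly less than $w(e)$.

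For the forward direction, suppose $e = uv \notin E(T)$. Then at the moment Kruskal's algorithm examines $e$ it declines to add it, which by the rule above means $u$ and $v$ already lie in the same component of the partial forest assembled from the edges examined before $e$. Every such edge has weight strictly less than $w(e)$, so there is a simple $u$--$v$ path $P$ in this forest (hence in $G$) all of whose edges have weight $< w(e)$; in particular $e \notin P$. Then $C := P + e$ is a cycle through $e$ in which every edge other than $e$ has weight $< w(e)$, as required.

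For the reverse direction, suppose there is a cycle $C$ through $e = uv$ with every edge of weight $\le w(e)$, hence, by injectivity, every edge of $C \setminus \{e\}$ of weight $< w(e)$. These edges form a $u$--$v$ path $u = x_0, x_1, \dots, x_m = v$, and each of them is examined by Kruskal's algorithm strictly before $e$. I will argue that after all edges of weight $< w(e)$ have been examined, $u$ and $v$ already share a component: for each consecutive pair $x_i, x_{i+1}$, when the edge $x_i x_{i+1}$ is examined it is either added (merging the components of $x_i$ and $x_{i+1}$) or declined because $x_i, x_{i+1}$ were already connected; either way $x_i$ and $x_{i+1}$ lie in the same component immediately afterwards, and by the monotone-merging invariant they remain so. Chaining along the path, $u$ and $v$ share a component before $e$ is examined, so Kruskal's algorithm declines $e$ and $e \notin E(T)$.

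The only delicate point is the reverse direction, where one must resist assuming that the lighter cycle edges themselves belong to $T$; the argument instead relies solely on the component-merging invariant, which is precisely what makes the chaining along $x_0, \dots, x_m$ valid regardless of which of those edges Kruskal's algorithm ultimately keeps.
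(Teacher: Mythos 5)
Your proof is correct, but it takes a genuinely different route from the paper's, most notably in the reverse direction. The paper argues statically, in terms of the finished tree $T$: for the forward direction it invokes the fundamental cycle $C_e$ of a non-tree edge $e$ together with the fact (justified by Kruskal's rule) that $C_e - e$ lies in $T_e = \{e' \in E(T) : w(e') \le w(e)\}$; for the reverse direction, given a cycle $C$ through $e$ with all edges of weight $\le w(e)$, it forms the cycle-space sum $W = C + \sum C_{e'}$ over the non-tree edges $e' \ne e$ of $C$, checks that $e$ survives the cancellation (each $C_{e'}$ consists of edges lighter than $e$) while every other edge of $W$ lies in $T$, and concludes $e \notin E(T)$ since a forest supports no nonzero element of the cycle space. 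Your argument instead traces the execution of Kruskal's algorithm: your forward direction extracts the connecting path from the partial forest at the moment $e$ is declined (the same underlying fact the paper packages as $C_e - e \subseteq T_e$), and your reverse direction replaces the cycle-space computation with the component-merging invariant, chaining connectivity along $C - e$ to show $e$ must be declined. Your approach is more elementary and self-contained — no fundamental cycles, no cycle space — and your closing remark correctly identifies the trap of assuming the lighter cycle edges themselves lie in $T$, which is exactly the pitfall the paper's algebraic cancellation is designed to handle. What the paper's algebra buys in exchange is structural information: its sum $W$ is identified as precisely the fundamental cycle $C_e$, so that argument shows any witness cycle $C$ can be reduced to the canonical one, a fact your execution-based proof does not expose.
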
 	

\begin{proof}
For every $e\in E(G)$ let $T_e$ be the spanning subgraph of $G$ with $E(T_e)=\{e'\in E(T): w(e')\le w(e)\}$. Moreover, for every $e\in E(G)\setminus E(T)$ let $C_e$ be the fundamental cycle of $e$ with respect to $T$. If $e\in E(G)\setminus E(T)$, then $e\not\in E(T_e)$ and, in view of Kruskal's algorithm, we will also have $C_e-e\Seq T_e$. This establishes the ``only-if" part. For the converse, let $e\in E(C)$ where $C$ is a cycle having every edge of a weight $\le w(e)$, 
and let
\begin{equation*}
W:=C+\left(\sum C_{e'}: e'\in (E(C)\setminus\{e\})\setminus E(T)\right),
\end{equation*}
where the summation is taken in the cycle space of $G$. Note that $E(W)\setminus \{e\}\Seq E(T_e)\Seq E(T)$. Moreover, since for every $e'\in E(C)\setminus\{e\}$ the edges of $C_{e'}$ are lighter than $e$, $e$ is an edge in $W$. Therefore, $e\in E(G)\setminus E(T)$, as desired. (Indeed, $W$ is the fundamental cycle $C_e$ of $e$ with respect to $T$.)
\end{proof} 

\bibliographystyle{plain}

\end{document}